 \newtheorem{thm}{Theorem}[section]
 \newtheorem{cor}[thm]{Corollary}
 \newtheorem{lem}[thm]{Lemma}
 \newtheorem{prop}[thm]{Proposition}
 \theoremstyle{definition}
 \theoremstyle{remark}
 \newtheorem{rem}[thm]{Remark}
 \theoremstyle{definition}
 \newtheorem{ex}[thm]{Example}
 \newcommand{\CC}{\mathbb{C}}
 \newcommand{\XX}{\mathbb{X}}
 \newcommand{\PP}{\mathbb{P}}
\newcommand{\Z}{\mathbb{Z}}
\newcommand{\X}{\mathbb{X}}
\begin{document}

\title{Complete intersections on general hypersurfaces}
\author[E. Carlini]{Enrico Carlini}
\address[E. Carlini]{Dipartimento di Matematica, Politecnico di Torino, Torino, Italia}
\email{enrico.carlini@polito.it}

\author[L. Chiantini]{Luca Chiantini}
\address[L. Chiatini]{Dipartimento di Scienze Matematiche e Informatiche, Universit\`{a} di Siena, Siena, Italia}
\email{chiantini@unisi.it}

\author[A.V. Geramita]{Anthony V. Geramita}
\address[A.V. Geramita]{Department of Mathematics and Statistics, Queen's University, Kingston, Ontario, Canada, K7L 3N6 and Dipartimento di Matematica, Universit\`{a} di Genova, Genova, Italia}
\email{Anthony.Geramita@gmail.com \\ geramita@dima.unige.it  }

\address{}

\email{}

 \subjclass{Primary ; Secondary }

\date{}

%%% ----------------------------------------------------------------------

\begin{abstract}
We ask when certain complete intersections of codimension $r$ can
lie on a generic hypersurface in $\PP^n$.  We give a complete
answer to this question when $2r \leq n+2$ in terms of the degrees
of the hypersurfaces and of the degrees of the generators of the
complete intersection.

\end{abstract}

%%% ----------------------------------------------------------------------
\maketitle
%%% ----------------------------------------------------------------------

\section{Introduction}

Many problems in classical projective geometry ask about the
nature  of special subvarieties of some given family of varieties,
e.g. how many isolated singular points can a surface of degree $d$
in $\PP^3$ have? when is it true that the members of a certain
family of varieties contain a line? contain a linear space of any
positive dimension?  The reader can easily supply other examples
of such questions.

This is the kind of problem we consider in this paper: what  types
of complete intersection varieties of codimension $r$ in $\PP^n$
can one find on the generic hypersurface of degree $d$?

In case $r=2$ it was known to Severi \cite{Severi} that for  $n
\geq 4$ the only complete intersections on a general hypersurface
are obtained by intersecting that hypersurface with another.

This observation was extended to $\PP^3$ by Noether (and
Lefschetz)  \cite{Lefschetz, GH85} for general hypersurfaces of
degree $\geq 4$.  These ideas were further generalized by
Grothendieck \cite{SGA2}.

Our approach to the problem mentioned above uses a mix of
projective  geometry and commutative algebra and is much more
elementary and accesible than, for example, the approach of
Grothendieck.  We are able to give a complete answer to the
question we raised for complete intersections of codimension $r$
in $\PP^n$ which lie on a general hypersurface of degree $d$
whenever $2r \leq n+2$.

The paper is organized in the following way: in the next section
(Section 2) we lay out the question we want to consider and
explain what are the interesting parameters for a response.

In Section 3 we collect some technical information we will  need
about varieties of reducible forms and their joins.  In order to
find the dimensions of these joins (using \lq\lq Terracini's
Lemma") we calculate the tangent space at a point of any variety
of reducible forms.  We also recall some information about
artinian complete intersection quotients of a polynomial ring.

In Section 4, we use the technical facts collected in Section 3
to reformulate our original question.  We illustrate the utility
of this reformulation to discuss complete intersections of
codimension $r$ in $\PP^n$ on a general hypersurface when $2r <
n+1$.  We further use our approach to give a new proof for the
existence of a line on the general hypersextic of $\PP^5$.

In Section 5 we state and prove our main theorem  which gives a
complete description of all complete intersections of codimension
$r$ in $\PP^n$ which lie on a generic hypersurface when $2r \leq
n+2$.

\section{Question}\label{questionSECTION}

The  objects of study of this paper are complete intersection
subschemes of projective space.  Recall that $Y\subset\PP^n$ is a
complete intersection scheme  if its ideal is generated by a
regular sequence, more precisely, $I(Y)=(F_1,\ldots,F_r)$, $F_i\in
S=\mathbb{C}[x_0,\ldots,x_n]$, and $F_1, \ldots , F_r$ form a
regular sequence in $S$. If $\deg F_i=a_i$ for all $i$, we will
say that such a $Y$ is a $CI(a_1,\ldots,a_r)$ and we will assume
$a_1\leq\ldots\leq a_r$; notice that $Y$ is unmixed of codimension
$r$ in $\PP^n$. With this notation we can rephrase the statement

\begin{quote} the degree $d$ hypersurface $X$ contains a
$CI(a_1,\ldots,a_r)$
\end{quote}

in terms of ideals as follows

\begin{quote}
$I(X)=(F)\subset (F_1,\ldots,F_r)$ for forms $F_i$ forming a
regular sequence and such that $\deg F_i=a_i$ for all $i$.
\end{quote}

Clearly not all choices of the degrees are of interest for us,
e.g. if $a_i>d$ for all $i$, then no $CI(a_1,\ldots,a_r)$ can be
found on a degree $d$ hypersurface. On the other hand, any
hypersurface of degree $d$ contains a $CI(a_1,\ldots,a_r)$ if
$a_i=d$, for some $i$. Simply cut that hypersurface with general
hypersurfaces of degrees $a_j,\ j\neq i$.

So, one need only consider $CI(a_1,\ldots , a_r)$ where none of the $a_i = d$.

\begin{lem}\label{lessthand} Let $a_1 \leq \ldots \leq a_i < d < a_{i+1} \leq \ldots \leq a_r$ with $r \leq n$.
The following are equivalent facts:

\begin{itemize}
\item there is a $CI(a_1, \ldots, a_r)$ on the general
hypersurface of degree $d$ in $\PP^n$;

\item there is a $CI(a_1, \ldots , a_i)$ on the general
hypersurface of degree $d$ in $\PP^n$.
\end{itemize}
\end{lem}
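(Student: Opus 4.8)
The plan is to prove the two implications separately, the downward direction being essentially trivial and the upward direction carrying all the content. Suppose first that the general hypersurface $X$ of degree $d$ contains a $CI(a_1,\ldots,a_r)$, say $Y$ with $I(Y)=(F_1,\ldots,F_r)$ and $\deg F_j=a_j$. Since $a_{i+1},\ldots,a_r$ all exceed $d$, the forms of degree $\leq d$ in $I(Y)$ can only come from the first $i$ generators together with multiples of $F$ itself; in particular the subscheme $Z$ cut out by $F_1,\ldots,F_i$ is a $CI(a_1,\ldots,a_i)$ (these $i$ forms remain a regular sequence as a subset of a regular sequence, and $i\leq r\leq n$ guarantees the expected codimension), and it lies on $X$ because $I(X)=(F)\subset I(Y)\subset (F_1,\ldots,F_i)$ — the containment at the level of the degree-$d$ piece shows $F$ is already a combination of $F_1,\ldots,F_i$. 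This gives the second bullet from the first.

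For the reverse implication I would start from a $CI(a_1,\ldots,a_i)$, call it $Z$, lying on the general degree-$d$ hypersurface $X$, with $I(Z)=(F_1,\ldots,F_i)$ and $F\in(F_1,\ldots,F_i)$. The idea is to enlarge $Z$ to a full $CI(a_1,\ldots,a_r)$ still sitting inside $X$ by adjoining $r-i$ further generators of the prescribed degrees $a_{i+1},\ldots,a_r$. Since each of these degrees exceeds $d$, I would choose the new generators $G_{i+1},\ldots,G_r$ to be \emph{generic} forms of those degrees; the point is that appending them does not disturb the containment $F\in(F_1,\ldots,F_i)\subseteq(F_1,\ldots,F_i,G_{i+1},\ldots,G_r)$, so the enlarged complete intersection automatically lies on $X$.

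The two things that must be checked are that this enlargement remains a genuine complete intersection — i.e. that $F_1,\ldots,F_i,G_{i+1},\ldots,G_r$ form a regular sequence of length $r\leq n$ — and that it has the correct codimension $r$. Both follow from a Bertini-type or general-position argument: after fixing the regular sequence $F_1,\ldots,F_i$, a generic form $G$ of any degree is a nonzerodivisor on $S/(F_1,\ldots,F_i)$ as long as the current codimension is less than $n$, because the associated primes of that quotient form a proper closed condition that a generic hypersurface avoids, and one repeats this $r-i$ times. Here the hypothesis $r\leq n$ is exactly what keeps us from running out of room, so the successive generic forms continue to cut down the dimension by one each time and the regular-sequence property is preserved at every stage.

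The main obstacle, and the step deserving the most care, is the upward direction's verification that generic forms of the large degrees $a_{i+1},\ldots,a_r$ can be adjoined while simultaneously (a) preserving the regular-sequence property and (b) keeping the containment of $F$. Point (b) is free since enlarging the ideal only helps, but point (a) requires that the quotient $S/(F_1,\ldots,F_i)$ have no embedded or top-dimensional obstructions that a generic form of the prescribed degree must hit; making the genericity statement precise — that the locus of bad forms is a proper subvariety of the relevant projective space of forms of degree $a_j$ — is where the argument needs the codimension bound $r\leq n$ and a clean application of prime avoidance for homogeneous ideals.
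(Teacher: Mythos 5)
Your proof is correct and takes essentially the same route as the paper: both implications rest on the observation that, since $a_{i+1},\ldots,a_r>d$, any expression $F=\sum F_jG_j$ can only involve $F_1,\ldots,F_i$, while the converse is handled by extending the regular sequence with generic forms of the large degrees (a step the paper's terse ``if and only if'' chain leaves implicit, and which you rightly justify by prime avoidance using $r\leq n$). Your phrase ``together with multiples of $F$ itself'' is a harmless slip --- $F$ is not a generator of $I(Y)$ --- and your subsequent argument via the degree-$d$ graded piece is the correct one.
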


\begin{proof}
Let $I(X)=(F)$, where $X$ is a general hypersurface of  degree $d$
in $\PP^n$.  Let $I(Y)=(F_1,\ldots,F_r)$ be the ideal of a
$CI(a_1, \ldots , a_r)$, with degrees $a_i$ as above.

Then $X\supset Y$ if and only if
\[
F=\sum_{j=1}^i F_jG_j
\]
and hence, if and only if $X\supset Y^\prime$, where $Y^\prime$ is
the complete intersection defined by $F_1, \ldots , F_i$.
\end{proof}

From this Lemma it is clear that the basic question to be
considered is:
\begin{quote}{(Q): }{\em
For which degrees $a_1,\ldots, a_r<d$ does the generic degree $d$
hypersurface of $\PP^n$ contain a $CI(a_1,\ldots,a_r)$?
}\end{quote}

If, rather than restricting to the generic case, we asked if {\em
some} hypersurface of degree $d$ contains a $CI(a_1, \ldots ,
a_r)$, then the answer is trivial. Indeed,  the ideal of any
$CI(a_1,\ldots,a_r)$ ($a_i < d$) always contains degree $d$
elements.

%Indeed, there are degree $d$ hyperurfaces containing a $CI(a_1,
%\ldots ,a_r)$ for any choice of the degrees $a_i < d$. In fact,
%the ideal of such a $CI(a_1,\ldots,a_r)$ has degree $d$ elements.
%Thus, to give an answer to our question we have to determine how
%special these hyperusrfaces are among the degree $d$
%hypersurfaces.

\section{Technical facts}

If $\lambda = (\lambda_1, \ldots , \lambda_s)$ is a  partition of
the integer $d$ (i.e. $\sum_{i=1}^s\lambda_i = d$ and $\lambda_1
\geq \ldots \geq \lambda _s > 0$) we write $\lambda\vdash d$.  For
each $\lambda \vdash d$ we define a subvariety $\XX_\lambda
\subset \PP (S_d) \simeq \PP^N$ (where $N = {d+n \choose n} -1$)
as follows:
\[
\XX_\lambda:= \{\ \ [F] \in S_d \ \vert \ F=F_1\cdots F_s, \ \deg F_i = \lambda_i \ \ \}.
\]
We call $\XX_\lambda$ the {\it variety of reducible forms of type
$\lambda$}.  The dimension of $\XX_\lambda$ is easily seen to be
$[\sum_{i=1}^s {\lambda_i+n\choose n}]-s$. (For other elementary
properties of $\XX_\lambda$ see \cite{Mamma} and for the special
case $\lambda_1=\ldots=\lambda_s=1$ see \cite{Ca04JA},
\cite{Ca05Siena} or \cite{Chi02} for the $n=2$ case).

If $x_1, \ldots , x_r$ are independent points  of $\PP^N$ we will
call the $\PP^{r-1}$ spanned by these points the {\it join of the
points $x_1, \ldots x_r$} and write
\[
J(x_1, \ldots , x_r) := \langle x_1, \ldots , x_r \rangle .
\]

More generally, if $X_1, \ldots , X_r$ are  varieties in $\PP^N$
then the {\it join of $X_1, \ldots , X_r$} is
\[
J(X_1, \ldots , X_r):= \overline{ \bigcup \{ J(x_1, \ldots , x_r)\ \vert \ x_i \in X_i, \{x_1, \ldots , x_r\}\ \hbox{independent} \}}
\]

In case $X_1 = \cdots =  X_r = X$ we write
\[
J(X_1, \ldots , X_r):= Sec_{r-1}(X)
\]
and call this the {\it $(r-1)^{st}$ (higher) secant variety of
$X$}.

Joins and secants of projective  varieties are important auxiliary
varieties which can help us better understand the geometry of the
original varieties  (see e.g.
\cite{CGG1,CGG2,ChCi06,Ci01,Ge,LM,AAd,CatJ}). One of the most
fundamental questions we can ask about joins and secants is: {\it
What are their dimensions?}

This is, in general, an extremely  difficult question to answer.
The famous {\it Lemma of Terracini} (which we recall below) is an
important observation which will aid us in answering this
question.

\begin{lem}
[Lemma of Terracini]\label{terracinilemma}   Let $X_1, \ldots ,
X_r$  be reduced subvarieties of $\PP^N$ and let $p \in J = J(X_1,
\ldots , X_r)$ be a generic point of $J$.

Suppose that $p \in J(p_1, \ldots , p_r)$,  then the
(projectivized) tangent space to $J$ at $p$, i.e. $T_p(J)$, can be
described as follows:
\[
T_p(J) = \langle T_{p_1}(X_1), \ldots , T_{p_r}(X_r) \rangle .
\]

Consequently,
\[
\dim J = \dim \langle T_{p_1}(X_1), \ldots , T_{p_r}(X_r) \rangle .
\]

\end{lem}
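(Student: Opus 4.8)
The plan is to pass to affine cones and realize the join as the closure of the image of an explicit addition morphism, then invoke the principle that for a dominant morphism the differential surjects onto the tangent space of the image at a general point. I would begin by letting $\hat X_i \subset \CC^{N+1}$ denote the affine cone over $X_i$ and introducing the morphism
\[
\sigma : \hat X_1 \times \cdots \times \hat X_r \To \CC^{N+1}, \qquad \sigma(v_1, \ldots, v_r) = v_1 + \cdots + v_r.
\]
Because each $\hat X_i$ is a cone, the image of $\sigma$ already contains every linear combination $\sum_i \lambda_i q_i$ with $q_i \in \hat X_i$, so by the very definition of the join the Zariski closure of the image of $\sigma$ is exactly the affine cone $\hat J$ over $J = J(X_1, \ldots, X_r)$.

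Next I would compute the differential. Since $\sigma$ is the restriction of a linear map, its differential at a point $(v_1, \ldots, v_r)$ is again addition,
\[
d\sigma_{(v_1, \ldots, v_r)}(w_1, \ldots, w_r) = w_1 + \cdots + w_r, \qquad w_i \in T_{v_i}(\hat X_i),
\]
so the image of $d\sigma$ is the linear subspace $T_{v_1}(\hat X_1) + \cdots + T_{v_r}(\hat X_r)$ of $\CC^{N+1}$. The crux of the argument is then the standard fact, valid over $\CC$ by generic smoothness: for a morphism $f : V \To W$ with image closure $Z = \overline{f(V)}$, at a general smooth point $v \in V$ one has $T_{f(v)}(Z)$ equal to the image of $df_v$. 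Indeed the image of $df_v$ is always contained in $T_{f(v)}(Z)$, while for general $v$ its dimension equals $\dim Z = \dim T_{f(v)}(Z)$, forcing equality. Applying this to $\sigma$ at a suitable representative $(v_1, \ldots, v_r)$ of $p$ gives
\[
T_v(\hat J) = T_{v_1}(\hat X_1) + \cdots + T_{v_r}(\hat X_r).
\]

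Finally I would projectivize: passing from affine cones back to $\PP^N$, each cone tangent space $T_{v_i}(\hat X_i)$ becomes the projective tangent space $T_{p_i}(X_i)$, and the sum of the cones becomes their linear span, so the identity reads $T_p(J) = \langle T_{p_1}(X_1), \ldots, T_{p_r}(X_r) \rangle$; taking dimensions yields the stated consequence. The point demanding the most care — and the main obstacle — is the genericity bookkeeping: one must guarantee that the general point $p \in J$ admits representatives $v_i$ lying in the locus where $d\sigma$ attains its maximal rank, so that the tangent-space-equals-image-of-differential principle genuinely applies at $(v_1, \ldots, v_r)$. This is precisely where the hypothesis that $p$ is generic is used, and where characteristic zero enters through generic smoothness.
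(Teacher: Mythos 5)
Your proposal is correct, but there is nothing in the paper to compare it against: the authors state Terracini's Lemma as a classical result (``the famous Lemma of Terracini'') and give no proof, using it as a black box to compute dimensions of joins via Proposition \ref{tangentspace} and Corollary \ref{tangentjoin}. The argument you supply is the standard modern proof, and all its steps check out: the closure of the image of the addition map $\sigma:\hat X_1\times\cdots\times\hat X_r\to\CC^{N+1}$ is indeed the affine cone over $J(X_1,\ldots,X_r)$ (closedness of cones under scaling absorbs the ``independent points'' condition in the definition after taking closures); since $\sigma$ is the restriction of a linear map, $\mathrm{im}\, d\sigma_{(v_1,\ldots,v_r)} = T_{v_1}(\hat X_1)+\cdots+T_{v_r}(\hat X_r)$; generic smoothness in characteristic zero gives $T_v(\hat J)=\mathrm{im}\, d\sigma$ at a general source point mapping to a smooth point of $\hat J$; and projectivizing turns cone tangent spaces into embedded projective tangent spaces and sums into spans. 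You also correctly flag the genericity bookkeeping and the role of characteristic zero, which is consistent with the paper's own caveat in Remark \ref{genjoin}(i) that their tangent space computation needs characteristic $0$. One small point worth noting: your proof establishes the tangent space identity for the decomposition $p\in J(p_1,\ldots,p_r)$ arising from a general point of the source, whereas the lemma as quoted appears to allow an arbitrary decomposition of the generic $p$; the standard formulation (and the only way the paper uses it, with the $p_i=[F_iG_i]$ generic in Lemma \ref{algeqlem}) takes both $p$ and the $p_i$ generic, so your argument proves exactly the statement that is needed.
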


We want to apply this Lemma in the case that  the $X_i$ are all of
the form $\XX_{\lambda^{(i)}}, \ \lambda^{(i)} \vdash d, \ i = 1,
\ldots , r$.  A crucial first step in such an application is,
therefore, a calculation of $T_{p_i}(\XX_{\lambda ^{(i)} })$ where
$p_i \in \XX_{\lambda ^{(i)} }$.

\begin{prop}\label{tangentspace}
Let $\lambda \vdash d$, $\lambda = (\lambda_1, \ldots , \lambda_s)$ and let $p \in \XX_\lambda$ be a generic point of $\XX_\lambda$.

Write $p = [F_1\cdots F_s]$ where $\deg F_i = \lambda_i, i = 1,
\ldots , s$ and let $I_p\subset S=\CC[x_0, \ldots , x_n]$ be the
ideal defined by:
\[
I_p = (F_2\cdots F_s, F_1F_3\cdots F_s, \ldots , F_1 \cdots F_{s-1} ).
\]

Then the tangent space to $\XX_\lambda$ at the point $p$ is the projectivization of $(I_p)_d$ and hence has dimension
\[
\dim T_p(\XX_\lambda) = \dim _\CC (I_p)_d - 1.
\]
\end{prop}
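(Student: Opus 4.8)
The plan is to realize $\XX_\lambda$ as the image of an explicit parametrization and to compute its tangent space as the image of the differential of that parametrization. Working with affine cones, I consider the multiplication morphism
\[
m \colon S_{\lambda_1} \times \cdots \times S_{\lambda_s} \To S_d, \qquad m(F_1, \ldots , F_s) = F_1 \cdots F_s,
\]
whose image is the affine cone $\widehat{\XX}_\lambda$ over $\XX_\lambda$. Since $\XX_\lambda$ is the closure of this locus of products and we work over $\CC$, a generic point $p = [F_1 \cdots F_s]$ of $\XX_\lambda$ is a smooth point, and it is the image under $m$ of a generic point of the source. It therefore suffices to identify the image of $dm$ at $(F_1, \ldots , F_s)$ with $(I_p)_d$.

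First I would compute the differential. Because $m$ is a product that is linear in each factor, the Leibniz rule gives, for a tangent direction $(G_1, \ldots , G_s)$ with $G_i \in S_{\lambda_i}$,
\[
dm_{(F_1, \ldots , F_s)}(G_1, \ldots , G_s) = \sum_{i=1}^s F_1 \cdots F_{i-1} G_i F_{i+1} \cdots F_s = \sum_{i=1}^s \widehat{F_i}\, G_i,
\]
where $\widehat{F_i} := F_1 \cdots F_{i-1} F_{i+1} \cdots F_s$ is exactly the $i$-th generator of $I_p$, of degree $d - \lambda_i$. Letting the $G_i$ range over all of $S_{\lambda_i}$, the image of $dm$ is $\sum_{i=1}^s \widehat{F_i}\, S_{\lambda_i}$. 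On the other hand, a degree-$d$ element of $I_p = (\widehat{F_1}, \ldots , \widehat{F_s})$ is precisely a combination $\sum_i \widehat{F_i} h_i$ with $\deg h_i = \lambda_i$, so $(I_p)_d = \sum_{i=1}^s \widehat{F_i}\, S_{\lambda_i}$. Hence the image of the differential coincides with $(I_p)_d$ on the nose.

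It then remains to upgrade the inclusion $\IM(dm) \subseteq T_F(\widehat{\XX}_\lambda)$, which holds automatically because $\widehat{\XX}_\lambda$ is the closure of the image of $m$, to an equality. This is the one genuinely non-formal point, and I expect it to be the main obstacle. I would settle it by generic smoothness: over $\CC$ the dominant morphism $m$ onto $\widehat{\XX}_\lambda$ is smooth over a dense open subset of its image, so at the generic point $F$ the differential $dm$ is surjective onto $T_F(\widehat{\XX}_\lambda)$, forcing $T_F(\widehat{\XX}_\lambda) = (I_p)_d$. Projectivizing yields $T_p(\XX_\lambda) = \PP\bigl((I_p)_d\bigr)$ and therefore $\dim T_p(\XX_\lambda) = \dim_\CC (I_p)_d - 1$. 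I would deliberately avoid the alternative of computing $\dim_\CC (I_p)_d$ by hand, since any two generators $\widehat{F_i}, \widehat{F_j}$ share the common factor $\prod_{k \neq i,j} F_k$ and so are very far from forming a regular sequence, making a direct Hilbert-function computation via a Koszul complex awkward; the generic-smoothness route sidesteps this entirely.
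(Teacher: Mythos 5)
Your proposal is correct and follows essentially the same route as the paper: both parametrize the (cone over the) variety by the multiplication map $S_{\lambda_1}\times\cdots\times S_{\lambda_s}\to S_d$, compute the differential via the Leibniz rule to get $\sum_i F_1\cdots F_i'\cdots F_s$, and identify its image with $(I_p)_d$. The only difference is that you make explicit the generic-smoothness step equating the image of the differential with the tangent space at a generic point, which the paper leaves implicit (flagging only, in a remark, that characteristic $0$ is needed precisely for this).
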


\begin{proof}
Consider the map of affine spaces
\[
\Phi: S_{\lambda_1} \times \cdots \times  S_{\lambda_s} \rightarrow S_d
\]
defined by
\[
\Phi((A_1, \ldots , A_s)) =  A_1\cdots A_s .
\]

Let $P \in S_{\lambda_1} \times \cdots \times  S_{\lambda_s}$ be
the  point $P = (F_1, \ldots , F_s)$.  A tangent direction at $P$
is given by any vector of the form ${v} = (F_1^\prime, \ldots ,
F_s^\prime)$ and the line through $P$ in that direction is
\[
L_{{v}}: = (F_1 + \mu F_1^\prime, \ldots , F_s + \mu F_s^\prime),\
\mu \in \CC .
\]

A simple calculation shows that the tangent  vector to
$\Phi(L_{{v}})$ at the point $\Phi(P)=p$ is exactly $\sum_{i=1}^s
F_1\cdots  {F_i}^\prime  \cdots F_s$ and that proves the
proposition.

\end{proof}

In view of Terracini's Lemma, the following corollary is immediate.

\begin{cor} \label{tangentjoin}

Let $\lambda^{(1)}, \ldots , \lambda^{(r)}$ all be partitions of $d$ where
\[
\lambda^{(i)} = (\lambda_{i1}, \lambda_{i2}) .
\]

Let
\[
I = (F_{11}, F_{12}, F_{21}, F_{22}, \ldots , F_{r1}, F_{r2} )
\]
be an ideal of $S$ generated by generic forms where
\[
\deg F_{ij} = \lambda_{ij}, \hbox{ for } 1 \leq i \leq r, j = 1,2.
\]

If
\[
J = J(\XX_{\lambda^{(1)}}, \ldots , \XX_{\lambda^{(r)}})
\]
then
\[
\dim J = \dim _\CC I_d - 1.
\]
\end{cor}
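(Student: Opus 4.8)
The plan is to combine Terracini's Lemma (Lemma~\ref{terracinilemma}) with the tangent space computation of Proposition~\ref{tangentspace}. By Terracini, if $p \in J$ is a generic point lying in the join $J(p_1, \ldots, p_r)$ of generic points $p_i \in \XX_{\lambda^{(i)}}$, then $\dim J = \dim \langle T_{p_1}(\XX_{\lambda^{(1)}}), \ldots, T_{p_r}(\XX_{\lambda^{(r)}}) \rangle$. So the entire problem reduces to identifying the span of the individual tangent spaces as the projectivization of a graded piece of an ideal.

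First I would apply Proposition~\ref{tangentspace} to each factor. Since each $\lambda^{(i)} = (\lambda_{i1}, \lambda_{i2})$ has only two parts, write $p_i = [F_{i1}F_{i2}]$ with $\deg F_{ij} = \lambda_{ij}$. The proposition then says that $T_{p_i}(\XX_{\lambda^{(i)}})$ is the projectivization of $(I_{p_i})_d$, where $I_{p_i} = (F_{i2}, F_{i1})$ — that is, the degree-$d$ part of the ideal generated by the two forms $F_{i1}$ and $F_{i2}$ defining $p_i$. Genericity of $p$ on $J$ lets me take each $p_i$ generic on its own factor, and independence of generic choices means I can take all the $F_{ij}$ to be generic forms of the prescribed degrees, matching the hypothesis that $I = (F_{11}, F_{12}, \ldots, F_{r1}, F_{r2})$ is generated by generic forms.

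Next I would take the linear span. The span of the projectivizations of the subspaces $(I_{p_i})_d \subset S_d$ is the projectivization of their sum as vector subspaces of $S_d$. The key algebraic observation is that this vector-space sum is exactly $(I)_d$: a degree-$d$ element of $I$ is a sum $\sum_{i,j} F_{ij} H_{ij}$ of multiples of the generators, and collecting the terms for each fixed $i$ shows it lies in $\sum_i (I_{p_i})_d$, with the reverse containment being immediate. Hence $\langle T_{p_1}(\XX_{\lambda^{(1)}}), \ldots, T_{p_r}(\XX_{\lambda^{(r)}}) \rangle$ is the projectivization of $I_d$, and taking dimensions gives $\dim J = \dim_\CC I_d - 1$.

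The main point requiring care — rather than a genuine obstacle — is the interchange between the genericity of the point $p \in J$ demanded by Terracini and the genericity of the generators $F_{ij}$ assumed in the statement. One must argue that a generic point of $J$ does arise from generic choices of the $p_i$, and that generic $p_i = [F_{i1}F_{i2}]$ correspond to generic forms $F_{ij}$ of degrees $\lambda_{ij}$; this is what lets us replace the ad hoc ideal $\sum_i I_{p_i}$ by the single ideal $I$ generated by $2r$ generic forms. Since the statement is flagged as an immediate corollary, I expect this identification to be routine, and the substance of the argument to be the purely formal equality of vector spaces $\sum_i (I_{p_i})_d = I_d$ combined with the two cited results.
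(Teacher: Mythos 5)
Your proposal is correct and follows exactly the route the paper intends: the paper declares the corollary ``immediate'' from Terracini's Lemma together with Proposition~\ref{tangentspace}, and you have simply written out that argument, correctly identifying $T_{p_i}(\XX_{\lambda^{(i)}})$ with the projectivization of $((F_{i1},F_{i2}))_d$ and the span of the tangent spaces with the projectivization of $\sum_i (I_{p_i})_d = I_d$. The genericity matching you flag is indeed routine, since the parametrization $\Phi$ of $\XX_{\lambda^{(i)}}$ is dominant, so generic points $p_i$ correspond to generic factor forms $F_{ij}$.
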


\begin{rem} \label{genjoin} It is useful to note the
following facts:

\begin{enumerate}
\item In Proposition \ref{tangentspace} we are using the fact that
$\mathbb{C}$ has characteristic 0. The problem is that the
differential is not necessarily generically injective in
characteristic $p$.

\item\label{regseqrem1}   Observe that the generic  point in
$\XX_\lambda$, $\lambda = (\lambda_1, \ldots , \lambda_s) \vdash
d$ can always be written as the product of $s$ irreducible forms
with the property that any $\ell$-subset of these $s$ forms ($\ell
\leq n+1$) is a regular sequence.

\item\label{regseqrem3}  This last can be extended  easily to
joins of varieties of reducible forms.  I.e. the generic point in
such a join can be written as a sum of elements with the property
that each summand is a point enjoying the property described in
\eqref{regseqrem1} above. Moreover, every $\ell$-subset ($\ell
\leq n+1$) of the set of all the irreducible factors of all of
these summands is also a regular sequence.

\item Fr\"{o}berg (see \cite{Fro}) has a conjecture about the
multiplicative  structure of rings $S/I$, where $S = \CC[x_0,
\ldots , x_n]$ and $I$ is an ideal generated by a set of generic
forms.  This conjecture gives the Hilbert functions of such rings.
However, apart from the cases $n=1$ (proved several times by
various authors, see \cite{Fro, GeSc,
IaKa}) and $n=2$ (proved by \cite{Anick}) this conjecture
has resisted attempts to prove it.

Notice that in terms of the geometric  problem in Corollary
\ref{tangentjoin}, one need only consider Fr\"{o}berg's conjecture
for a strongly restricted collection of degrees.
\end{enumerate}
\end{rem}

We will need some specific information  about the Hilbert function
of some artinian complete intersections in polynomial rings.  The
following lemma summarizes the facts we shall use.

\begin{lem}\label{basiclemma} Let $r>1$ and $F_1,\ldots,F_r,G_1,\ldots,G_r$ be generic forms in $\CC[y_1,\ldots,y_{2r-1}]$ having degrees
\[1<\deg F_1=a_1\leq\deg F_2=a_2\leq\ldots\leq\deg F_r=a_r\leq d/2\]
and
\[d/2\leq\deg G_r=d-a_r\leq\ldots\leq\deg G_1=d-a_1\]
for a non-negative integer $d$.

Consider the quotient
\[A=\CC[y_1,\ldots,y_{2r-1}]/(F_1,\ldots,F_r,G_r,\ldots,G_{3},G_{2})\]
and its Hilbert function $H_A$. The following facts hold:
\begin{enumerate}

\item\label{one} $H_A$ is symmetric with respect to
$c={(r-1)d+a_1-2r+1\over 2}$;

\item\label{two} if $H_A(i)\geq H_A(i+1)$ then $H_A(j)$ is
non-increasing for $j\geq i$;

\item\label{six} the multiplication map on  $A_i$  given by
$\overline{G_1}$ (the class of ${G_1}$ in $A$) has maximal rank.

\end{enumerate}

If one of the following holds

\[r=2 \mbox{ and } a_1\geq 5, \mbox{ or}\]
\[r=3 \mbox{ and } a_1\geq 3, \mbox{ or}\]
\[r=3 \mbox{ and } a_1=2,d\neq 4, \mbox{ or}\]
\[r>3 \mbox{ and } a_1\geq 2,\]

we also have that:

\begin{enumerate}

\setcounter{enumi}{3}

\item\label{three} if $i\leq a_1$, then $H_A(i)<H_A(i+1)$.

\item\label{four} if $a_1<i\leq c$, then $H_A(a_1)<H_A(i)$.

\item\label{five} if $c<i$, then $H_A(a_1)>H_A(i)$ if and only if
$c-a_1<i-c$.

\end{enumerate}
\end{lem}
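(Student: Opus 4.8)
The plan is to recognize at the outset that $A$ is an \emph{artinian complete intersection}. The defining ideal is generated by $r+(r-1)=2r-1$ generic forms (namely $F_1,\dots,F_r$ together with $G_2,\dots,G_r$) in the $2r-1$ variables $y_1,\dots,y_{2r-1}$, and $2r-1$ generic forms in $2r-1$ variables form a regular sequence. Hence the Hilbert series is the product
\[
H_A(t)=\frac{\prod_{i=1}^r(1-t^{a_i})\cdot\prod_{i=2}^r(1-t^{d-a_i})}{(1-t)^{2r-1}}.
\]
Since $A$ is Gorenstein artinian, $H_A$ is symmetric about half its socle degree; the generator degrees sum to $\sum_{i=1}^r a_i+\sum_{i=2}^r(d-a_i)=a_1+(r-1)d$, so the socle degree is $a_1+(r-1)d-(2r-1)$ and its half is exactly the value $c$ appearing in \eqref{one}. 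This establishes \eqref{one}.

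Next I would invoke the fact that over $\CC$ a generic complete intersection has the strong Lefschetz property: the monomial complete intersection with the same generator degrees has it (Stanley, Watanabe; here characteristic $0$ is essential, cf.\ Remark \ref{genjoin}(1)), and the property is open on the generators, so our generic $F_i,G_i$ inherit it. For \eqref{two}, if $H_A(i)\ge H_A(i+1)$ then the map $\times\ell\colon A_i\to A_{i+1}$ given by a general linear form, having maximal rank, must be surjective; since $A/\ell A$ is generated in degree $1$, surjectivity propagates to all higher degrees, so $H_A$ is non-increasing from $i$ on. For \eqref{six}, the strong Lefschetz property supplies a single form $\ell^{\,d-a_1}$ for which $\times\ell^{\,d-a_1}\colon A_i\to A_{i+d-a_1}$ has maximal rank in each degree; maximal rank of multiplication by a form of degree $d-a_1$ is an open condition, and as $A$ is artinian there are only finitely many degrees to control, so a generic form of that degree — in particular $\overline{G_1}$ — has maximal rank in every degree.

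The computational heart is \eqref{three}. Factoring one $(1-t)$ against $(1-t^{a_1})$ gives $H_A(t)=(1+t+\cdots+t^{a_1-1})\,Q(t)$, where $Q(t)$ is the Hilbert series of the auxiliary complete intersection $A'=\CC[y_1,\dots,y_{2r-2}]/(F_2,\dots,F_r,G_2,\dots,G_r)$. Reading off coefficients, $H_A(i)=\sum_{k=0}^{a_1-1}H_{A'}(i-k)$, so the successive difference telescopes to
\[
H_A(i+1)-H_A(i)=H_{A'}(i+1)-H_{A'}(i-a_1+1).
\]
For $i\le a_1$ the second argument satisfies $i-a_1+1\le 1$ while the first is strictly larger, so positivity reduces to showing $H_{A'}$ is strictly increasing through the lowest degrees up to $a_1+1$. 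As all generators of $A'$ have degree $\ge a_1\ge 2$, one has $H_{A'}(j)=\binom{j+2r-3}{2r-3}$ until the first generators appear, and strict growth near $a_1$ can fail only when too many generators concentrate in low degree in a ring of few variables. I expect this to be the main obstacle: the numerical hypotheses ($r=2,a_1\ge5$; $r=3,a_1\ge3$; $r=3,a_1=2,d\ne4$; $r>3,a_1\ge2$) are precisely calibrated to guarantee $H_{A'}(a_1)>1$ and $H_{A'}(a_1+1)>H_{A'}(1)=2r-2$, and one must verify that the excluded small cases genuinely fail these bounds.

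Finally, \eqref{four} and \eqref{five} follow formally. By \eqref{three} we have $H_A(a_1)<H_A(a_1+1)$, while \eqref{one} and \eqref{two} force $H_A$ to be non-decreasing on $[0,c]$; hence $H_A(a_1)<H_A(a_1+1)\le H_A(i)$ for $a_1<i\le c$, giving \eqref{four}. For \eqref{five}, symmetry yields $H_A(i)=H_A(2c-i)$ with $2c-i<c$, and by \eqref{three}--\eqref{four} one has $H_A(a_1)>H_A(2c-i)$ exactly when $2c-i<a_1$, that is, when $c-a_1<i-c$.
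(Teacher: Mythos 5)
Your proposal is correct in overall architecture, and items \eqref{one}, \eqref{two} and \eqref{six} run essentially along the paper's lines: \eqref{one} is the Gorenstein symmetry with the same socle-degree count, \eqref{two} is the same Weak Lefschetz propagation argument, and \eqref{six} is Stanley--Watanabe plus semicontinuity (the paper simply cites the theorem); your deductions of \eqref{four} and \eqref{five} are, if anything, more carefully justified than the paper's one-line versions. Where you genuinely diverge is \eqref{three}. The paper compares $A$ with the equal-degree model $\bar A$ in the \emph{same} $2r-1$ variables (all $2r-1$ generators of degree $a=a_1$), computes $H_{\bar A}(a)$ and $H_{\bar A}(a+1)$ directly, and controls the passage from $\bar A$ to $A$ through the estimates \eqref{HFestimates}, which require observing that the generators admit no linear syzygies. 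You instead factor $(1-t^{a_1})/(1-t)$ out of the Hilbert series, reducing to the auxiliary complete intersection $A'$ in $2r-2$ variables and the telescoped difference $H_A(i+1)-H_A(i)=H_{A'}(i+1)-H_{A'}(i+1-a_1)$. This is a clean and valid reduction, and it buys something: since $A'$ is again a complete intersection ($2r-2$ generic forms in $2r-2$ variables form a regular sequence), $H_{A'}$ is given exactly by the Koszul numerator, so the two values you need are purely numerical and no syzygy-genericity input is required beyond the regular sequence property.

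However, you stop at precisely the decisive point: you \emph{assert} that the numerical hypotheses guarantee $H_{A'}(a_1)>1$ and $H_{A'}(a_1+1)>2r-2$, writing that ``one must verify'' this. As submitted, that is a gap, though a benign one that does close. Since every generator degree $e_j$ of $A'$ satisfies $e_j\geq a_1$ and $2e_j\geq 2a_1>a_1+1$, the Koszul numerator gives $H_{A'}(a_1+1)=\binom{a_1+2r-2}{2r-3}-s(2r-2)-t$, where $s$ (resp.\ $t$) is the number of generators of degree $a_1$ (resp.\ $a_1+1$); the quantity $H_{A'}(a_1+1)-(2r-2)$ is minimized when $s=2r-2$, i.e.\ when $d=2a_1$ and all $a_i=a_1$, where it equals $\binom{a_1+2r-2}{2r-3}-(2r-1)(2r-2)$ --- exactly the paper's inequality \eqref{star}, since $\binom{a+2r-2}{a+1}=\binom{a+2r-2}{2r-3}$. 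So your missing verification reduces to the same binomial inequality the paper checks at $(a_1,r)=(5,2),(3,3)$ and for $a_1=2$, $r>3$; and in the boundary case $r=3$, $a_1=2$, where \eqref{star} vanishes, the hypothesis $d\neq 4$ forces $d\geq 5$, hence $d-a_i\geq 3$ and $s\leq 2$, restoring strictness --- the analogue of the paper's ``$s>0$'' step. The remaining check $H_{A'}(a_1)\geq\binom{a_1+2r-3}{2r-3}-(2r-2)>1$ is immediate in all listed cases. With these few lines added, your proof is complete and is, on the computational heart of the lemma, a genuinely different and arguably tidier route than the paper's.
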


\begin{proof}
As $A$ is a Gorenstein graded ring \eqref{one}  follows
immediately, while \eqref{six} is a consequence of a theorem of
Stanley \cite{Stanley} and Watanabe \cite{Wat}.

To prove \eqref{two} we can use the Weak Lefschetz property, i.e.
multiplication by a general linear form  has maximal rank, e.g.
see \cite{MigMR}. The condition on $H_A$, coupled with the Weak
Lefschetz property, yields that every element of $A_{i+1}$ is the
product of a fixed linear form with a form of degree $i$. Now
consider an element of $A_{i+2}$, call it $M$ , then since $A$ is
a standard graded algebra, $M = \sum_{i=1}^{2r-1} y_iC_i$, where
$y_i$ is the class of $y_i$ in $A$, and $C_i$ is the class of a
form of degree $i+1$.  By what we have seen, $C_i = LD_i$ where $L
$ is the form we had earlier and the $D_i$ are forms of degree
$i$. Rewriting we get $M = L\sum_{i=1}^{2r-1}y_iD_i$.  But
$\sum_{i=1}^{2r-1}y_iD_i$ is in $A_{i+1}$ hence $A_{i+2} =
LA_{i+1}$ and hence the dimension cannot increase. Proceeding by
induction we prove the statement.

% is equal to $LN$
%for some form $N$ in $A_i$.  We conclude that $M = L(LN) = L^2 N$
%(note only one linear form is used.).  In particular $A_{i+2} =
%LA_{i+1}$ and hence the dimension cannot increase.

%The condition on $H_A$, coupled with the Weak Lefschetz property,
%yields that every element of $A_{i+1}$ is the product of a fixed
%linear form with a form of degree $i$. Then, we apply again the
%Weak Lefschetz property to pass from $A_{i+1}$ to $A_{i+2}$
%multiplying by another generic linear form. Hence any element of
%$A_{i+2}$ is the product of a fixed form of degree $2$ (namely the
%product of the two generic linear forms) with a form of degree
%$i$. Iterating this argument yields that
%\[H_A(j)=\dim A_j\leq H_A(i)=\dim A_i\]
%for $j\geq i$ as we wanted to show.

As for \eqref{three}, it is enough to give the proof for $i=a_1$
as there are no generators of degree smaller than $a_1$. Let $\bar
A$ be a quotient obtained when all the forms $F_i$ and $G_i$ have
the same degree $a = a_1 = \ldots =a_r=d-a_r=\ldots = d-a_2 $.
Notice that it is enough to show the result for $\bar A$. In fact,
whenever we pass from $\bar A$ to another quotient $A$ by
increasing the degrees of $s$ forms we obtain

\begin{equation}
\begin{array}{l}
\displaystyle H_A(a)=H_{\bar A}(a)+s\\ \\\displaystyle H_{\bar
A}(a+1)+s(2r-1)-s\leq H_A(a+1)
\end{array}
\label{HFestimates}
\end{equation}

and the inequality $H_{\bar A}(a)<H_{\bar A}(a +1)$ is preserved;
these Hilbert function estimates use the fact that the forms $F_i$
and $G_i$ do not have linear syzygies. By straightforward
computations one gets
\[H_{\bar A}(a)={a+2r-2\choose a}-2r+1 \]
and
\[H_{\bar A}(a+1)={a+2r-1\choose a+1}-(2r-1)^2.\]
Thus the inequality $H_{\bar A}(a)<H_{\bar A}(a+1)$ is equivalent
to

\begin{equation}
{a+2r-2\choose a+1}-(2r-1)(2r-2)>0. \label{star}
\end{equation}

Notice that if \eqref{star} holds for the pair $(a,r)$ then it
holds for all the pairs $(a+i,r)$ with $i\geq 0$. By direct
computations we verify that the inequality is satisfied for
$(a,r)=(5,2),(3,3)$ and for $a=2$ and $r>3$. Hence, \eqref{star}
holds for
\[r=2 \mbox{ and } a\geq 5, \mbox{ or }\]
\[r=3 \mbox{ and } a\geq 3, \mbox{ or }\]
\[r>3 \mbox{ and } a\geq 2.\]
To complete the proof of \eqref{three} it is enough to evaluate
\eqref{HFestimates} for $r=3,a=2$ in the case $d\neq 4$, i.e.
$s>0$.

To show \eqref{four}, notice that by \eqref{two}, if
\[H_A(a_1)>H_A(i), a_1<i\]
then $H_A$ is definitely non-increasing and hence it cannot be
symmetric with respect to $c$ by \eqref{three}.

To get \eqref{five} it is enough to use symmetry and \eqref{four}.

\end{proof}

\section{Equivalences}

In this section we give some equivalent formulations of our basic
question (Q), formulated at the end of Section
\ref{questionSECTION}.

Clearly, if $X\subset\PP^n$ is a hypersurface of degree $d$ and
$Y\subset X$ is a $CI(a_1,\ldots,a_r)$, then the ideal inclusion
$I(X)=(F)\subset I(Y)=(F_1,\ldots,F_r)$ yields
\[F=F_1G_1+\ldots +F_rG_r \]
for forms $G_i$ of degrees $d-a_i$. But the converse is not true
in general. If $F=F_1G_1+\ldots +F_rG_r$ and the forms $F_i$ do
not form a regular sequence, then $(F_1,\ldots,F_r)$ is not the
ideal of a complete intersection. To produce an equivalence we
need to use joins:

\begin{lem}\label{joineqlem} The following are equivalent:
\begin{enumerate}
\item\label{joineqlem1} a generic hypersurface of degree $d$ of
$\PP^n$ contains a $CI(a_1,\ldots,a_r)$, where $a_i<d$ for all
$i$;

\item\label{joineqlem2} the join of the varieties of reducible
forms
 $\mathbb{X}_{(a_i,d-a_i)},
i=1,\ldots,r$ fills the space of degree $d$ forms in $n+1$
variables, i.e.
\[J(\mathbb{X}_{(a_1,d-a_1)},\ldots,\mathbb{X}_{(a_r,d-a_r)})=\PP(S_d).\]
\end{enumerate}
\end{lem}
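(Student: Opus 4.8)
The plan is to establish the equivalence by translating the ideal-theoretic statement ``$X$ contains a $CI(a_1,\dots,a_r)$'' into a statement about membership in a join, exploiting the decomposition $F = F_1G_1 + \cdots + F_rG_r$ noted just before the lemma. The key observation is that each summand $F_iG_i$ is a point of the variety $\XX_{(a_i,d-a_i)}$ of reducible forms (a product of a form of degree $a_i$ with one of degree $d-a_i$), so an expression $F = \sum_i F_iG_i$ says precisely that $[F]$ lies on the join $J(\XX_{(a_1,d-a_1)},\dots,\XX_{(a_r,d-a_r)})$. Thus the join fills $\PP(S_d)$ exactly when the generic degree-$d$ form admits such a decomposition.

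First I would prove the implication \eqref{joineqlem1} $\Rightarrow$ \eqref{joineqlem2}. If the generic hypersurface $X=V(F)$ contains a $CI(a_1,\dots,a_r)$ with ideal $(F_1,\dots,F_r)$, then $F \in (F_1,\dots,F_r)_d$, so $F = \sum_{i=1}^r F_iG_i$ with $\deg G_i = d-a_i$. Each $[F_iG_i] \in \XX_{(a_i,d-a_i)}$, so $[F] \in J(\XX_{(a_1,d-a_1)},\dots,\XX_{(a_r,d-a_r)})$. Since $F$ is generic and the join is closed, this forces the join to be all of $\PP(S_d)$.

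The reverse implication \eqref{joineqlem2} $\Rightarrow$ \eqref{joineqlem1} is where the real content lies, and it is the step I expect to be the main obstacle. Assuming the join equals $\PP(S_d)$, a generic $[F]$ lies on $J(x_1,\dots,x_r)$ for generic points $x_i = [F_iG_i] \in \XX_{(a_i,d-a_i)}$, giving $F = \sum_i c_i F_iG_i$; absorbing the scalars we again get $F = \sum_i F_iG_i \in (F_1,\dots,F_r)$. The delicate point is that this only exhibits $F$ in the ideal $(F_1,\dots,F_r)$; to conclude that $X$ contains a genuine \emph{complete intersection} I must know that the forms $F_1,\dots,F_r$ form a regular sequence. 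This is exactly what Remark \ref{genjoin}\eqref{regseqrem3} provides: since $[F]$ is a generic point of the join, the factors appearing in a generic decomposition can be taken so that every $\ell$-subset ($\ell \le n+1$) of all the irreducible factors is a regular sequence. In particular, because $r \le n$ here (as in the setup of Lemma \ref{lessthand}, and guaranteed by the hypothesis $a_i<d$ with the join filling the space), the forms $F_1,\dots,F_r$ themselves form a regular sequence, so $(F_1,\dots,F_r)=I(Y)$ for an honest $CI(a_1,\dots,a_r)$ with $Y \subset X$.

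The proof therefore hinges entirely on coupling the elementary join-membership dictionary with the genericity statement of Remark \ref{genjoin}; once the regular-sequence property is invoked, both implications are immediate. I would write out the forward direction briefly and then devote the bulk of the argument to making the appeal to Remark \ref{genjoin}\eqref{regseqrem3} precise, since that is the one place where the geometry of the generic point of the join, rather than mere set-theoretic membership, is essential.
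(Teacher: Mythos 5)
Your proof is correct and follows essentially the same route as the paper's: the forward implication via the ideal-membership decomposition $F=\sum F_iG_i$ with $[F_iG_i]\in\mathbb{X}_{(a_i,d-a_i)}$, and the reverse implication via Remark \ref{genjoin}\eqref{regseqrem3}, which is exactly the ingredient the paper cites to guarantee that the factors in a generic decomposition form a regular sequence. Your write-up simply spells out in more detail what the paper's two-sentence proof leaves implicit.
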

\begin{proof}
The implication \eqref{joineqlem1}$\Rightarrow$\eqref{joineqlem2}
simply follows from the ideal inclusion argument above yielding
the presentation $F=\sum F_iG_i$ for the generic degree $d$ form,
where $[F_iG_i]\in \mathbb{X}_{(a_i,d-a_i)}$ for all $i$. The
implication \eqref{joineqlem2}$\Rightarrow$\eqref{joineqlem1} is
easily shown using the description of the generic element of the
join, see Remark \ref{genjoin} \eqref{regseqrem3}.
\end{proof}
\begin{rem}\label{switch} Notice that there is an equality of varieties
\[\mathbb{X}_{(i,j)}=\mathbb{X}_{(j,i)}\]
for all non negative integers $i$ and $j$. Hence, by Lemma
\ref{joineqlem}, the condition
\[J(\mathbb{X}_{(a_1,d-a_1)},\ldots,\mathbb{X}_{(a_r,d-a_r)})=\PP(S_d)\]
is equivalent to the statement
\begin{quote}
a generic hypersurface of degree $d$ of $\PP^n$ contains a
$CI(b_1,\ldots,b_r)$, where $b_i=a_i$ or $b_i=d-a_i$ for all $i$.
\end{quote}
It follows from these observations that we can further restrict
the range of the degrees in our basic question (Q), i.e. it is
enough to consider
\[a_1\leq\ldots\leq a_r\leq {d\over 2}.\]
\end{rem}

Now we exploit Terracini's Lemma and the tangent space description
given in Corollary \ref{tangentjoin} in order to produce another equivalent
formulation of question (Q).
\begin{lem}\label{algeqlem} The following are equivalent:
\begin{enumerate}
\item\label{algeqlem1} the generic hypersurface of degree $d$ of
$\PP^n$ contains a $CI(a_1,\ldots,a_r)$, where $a_i<d$ for all
$i$;

\item\label{algeqlem2} let $F_1,\ldots,F_r$ and $G_1,\ldots,G_r$
be generic forms in $S=\mathbb{C}[x_0,\ldots,x_n]$ of degrees
$a_1\leq\ldots \leq a_r<d$ and $d-a_1,\ldots,d-a_r$ respectively,
then
\[H(S/(F_1,\ldots,F_r,G_r,\ldots,G_1),d)=0\]
where $H(\cdot,d)$ denotes the Hilbert function in degree $d$ of
the ring.
\end{enumerate}
\end{lem}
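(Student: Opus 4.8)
The plan is to connect the geometric statement about joins (Lemma \ref{joineqlem}) with the algebraic statement about Hilbert functions via the tangent space computation in Corollary \ref{tangentjoin}. First I would recall that by Lemma \ref{joineqlem}, condition \eqref{algeqlem1} is equivalent to the join $J = J(\XX_{(a_1,d-a_1)}, \ldots, \XX_{(a_r,d-a_r)})$ filling all of $\PP(S_d) \simeq \PP^N$, i.e. $\dim J = N = \dim_\CC S_d - 1$. The entire proof then reduces to translating this dimension condition into the vanishing of a Hilbert function value.

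The key step is to apply Corollary \ref{tangentjoin} with $\lambda^{(i)} = (a_i, d-a_i)$ for each $i$. That corollary tells us that if we take generic forms $F_i$ of degree $a_i$ and $G_i$ of degree $d-a_i$ (so that $\lambda_{i1} = a_i$, $\lambda_{i2} = d - a_i$), and form the ideal $I = (F_1, G_1, \ldots, F_r, G_r)$, then
\[
\dim J = \dim_\CC I_d - 1.
\]
Therefore $J = \PP(S_d)$, meaning $\dim J = \dim_\CC S_d - 1$, holds if and only if $\dim_\CC I_d = \dim_\CC S_d$, i.e. if and only if $I_d = S_d$. Now using the exact sequence
\[
0 \To I_d \To S_d \To (S/I)_d \To 0,
\]
we have $I_d = S_d$ precisely when $(S/I)_d = 0$, that is, when $H(S/I, d) = 0$. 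Since $I = (F_1, \ldots, F_r, G_r, \ldots, G_1)$, this is exactly statement \eqref{algeqlem2}.

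I expect the main point requiring care to be the genericity bookkeeping rather than any deep calculation. Corollary \ref{tangentjoin} computes the dimension of the join using generic choices of the generators, and Terracini's Lemma requires $p$ to be a generic point of $J$; I would need to note that the generic point of each $\XX_{(a_i,d-a_i)}$ is indeed expressible as a product $F_i G_i$ with $\deg F_i = a_i$, $\deg G_i = d - a_i$, as guaranteed by Remark \ref{genjoin} \eqref{regseqrem1}, and that genericity of the $F_i, G_i$ matches the genericity of the point of the join used in the tangent-space formula. The remaining subtlety is that the dimension of a projective variety equals the dimension of its tangent space at a \emph{generic} point; hence the equality $\dim J = \dim_\CC I_d - 1$ is an equality of the expected dimension with the actual dimension precisely because the forms are chosen generically, which is exactly the hypothesis built into statement \eqref{algeqlem2}. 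Once this identification is in place, the equivalence is immediate from the short exact sequence above.
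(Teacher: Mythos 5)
Your proof is correct and takes essentially the same approach as the paper: both reduce via Lemma \ref{joineqlem} to the condition that the join fills $\PP(S_d)$, then use the Terracini-based tangent space computation (Corollary \ref{tangentjoin}, i.e.\ Proposition \ref{tangentspace} plus Lemma \ref{terracinilemma}) to identify this with $(F_1,G_1)_d+\cdots+(F_r,G_r)_d=S_d$, equivalently $H(S/I,d)=0$. Your only variation is presentational, phrasing the final step through the short exact sequence $0\To I_d\To S_d\To (S/I)_d\To 0$ rather than writing the sum of ideals directly, and your genericity bookkeeping matches the paper's appeal to Remark \ref{genjoin}.
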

\begin{proof}
The condition on the join in Lemma \ref{joineqlem} can be read in
term of tangent spaces as equivalent to
\[\langle T_{P_1}(\mathbb{X}_{(a_1,d-a_1)}),\ldots,T_{P_r}(\mathbb{X}_{(a_r,d-a_r)}) \rangle=\PP(S_d)\]
for generic points $P_1=[F_1G_1],\ldots,P_r=[F_rG_r]$. Using the
description of the tangent space to the variety of reducible forms
this is equivalent to saying
\[(F_1,G_1)_d + \ldots+(F_r,G_r)_d=S_d\]
where $S_d$ is the degree $d$ piece of the polynomial ring $S$ and
the forms $F_i$ and $G_i$ are generic of degrees $a_i$ and $d-a_i$
respectively.
\end{proof}

As a straightforward application we get the following result:
\begin{prop}\label{smallrPROP}
The generic degree $d$ hypersurface of $\PP^n$ contains no
$CI(a_1,\ldots,a_r)$, $a_i<d$ for all $i$, when $2r<n+1$.
\end{prop}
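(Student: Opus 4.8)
The plan is to invoke the algebraic reformulation of question (Q) supplied by Lemma \ref{algeqlem}: the generic degree $d$ hypersurface of $\PP^n$ contains a $CI(a_1,\ldots,a_r)$ if and only if
\[
H(S/(F_1,\ldots,F_r,G_r,\ldots,G_1),d)=0,
\]
where the $F_i$ and $G_i$ are generic forms of degrees $a_i$ and $d-a_i$ respectively. To prove the Proposition it therefore suffices to show that, under the hypothesis $2r<n+1$, this Hilbert function value is strictly positive.

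First I would observe that the ideal in question is generated by exactly $2r$ generic forms in the $n+1$ variables $x_0,\ldots,x_n$, all of positive degree (since $0<a_i<d$ forces $1\leq a_i$ and $1\leq d-a_i$). Because $2r<n+1$, we have $2r\leq n$, so these $2r$ generic forms constitute a regular sequence and the quotient $A=S/(F_1,\ldots,F_r,G_r,\ldots,G_1)$ is a graded complete intersection of Krull dimension $n+1-2r\geq 1$. This positivity of the dimension is the crux of the matter.

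Next I would compute the Hilbert series of $A$. Writing $b_1,\ldots,b_{2r}$ for the degrees $a_1,\ldots,a_r,d-a_r,\ldots,d-a_1$, the complete intersection Hilbert series factors as
\[
\sum_{t\geq 0}H(A,t)\,z^t=\frac{\prod_{i=1}^{2r}(1-z^{b_i})}{(1-z)^{n+1}}=\frac{\prod_{i=1}^{2r}(1+z+\cdots+z^{b_i-1})}{(1-z)^{n+1-2r}}.
\]
The numerator is a polynomial with non-negative coefficients and constant term $1$, while the expansion of $(1-z)^{-(n+1-2r)}$ has strictly positive coefficients in every degree precisely because $n+1-2r\geq 1$. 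Their product therefore has strictly positive coefficients in every degree, so $H(A,t)>0$ for all $t\geq 0$; in particular $H(A,d)>0$.

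This contradicts the vanishing required by Lemma \ref{algeqlem}, and so the generic hypersurface contains no such complete intersection. The only genuine obstacle is establishing the strict positivity of $H(A,d)$, and the factorization above reduces this to the elementary observation that the codimension $2r$ being strictly smaller than the number of variables $n+1$ leaves a true power of $1/(1-z)$ in the denominator, which alone forces every coefficient to be positive.
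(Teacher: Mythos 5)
Your proposal is correct and follows essentially the same route as the paper: both invoke Lemma \ref{algeqlem} and then observe that $2r<n+1$ forces the quotient $S/(F_1,\ldots,F_r,G_1,\ldots,G_r)$ to have positive Krull dimension, hence nonvanishing Hilbert function in degree $d$. The only difference is cosmetic: where you verify positivity explicitly via the complete-intersection Hilbert series $\prod_{i}\bigl(1-z^{b_i}\bigr)/(1-z)^{n+1}$, the paper argues more briefly that the ideal has height at most $2r<n+1$ by Krull's height theorem, so the quotient is not artinian and its (standard graded) Hilbert function can never vanish.
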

\begin{proof}
We use Lemma \ref{algeqlem}. Consider in
$S=\mathbb{C}[x_0,\ldots,x_n]$ generic forms $F_1,\ldots,F_r$ and
$G_1,\ldots,G_r$ of degrees $a_i$ and $d-a_i$ respectively. If we
let $I$ be the ideal $(F_1,\ldots,F_r,G_1,\ldots,G_r)$, then we
want to show that $H(S/I,d)\neq 0$ and for that it is enough to
show that $S/I$ is not an artinian ring. As $I$ has height $2r$
and $2r<n+1$ the quotient cannot be zero dimensional and the
conclusion follows.
\end{proof}
\begin{rem} Using Lemma \ref{algeqlem} we can also recover many
classical results in an elegant and simple way. More precisely, we
can easily study the existence of complete intersections curves,
e.g. lines and conics, on hypersurfaces.

\end{rem}

\begin{ex} As an example we prove the following without using Schubert calculus:

{\em The generic hypersextic of $\PP^5$ contains a line.}
\begin{proof} Let $S=\mathbb{C}[x_0,\ldots,x_5]$ and consider the
ideal

\[I=(L_1,\ldots ,L_4,G_1,\ldots,G_4)\]

where the forms $L_i$ are linear forms and the forms $G_i$ have
degree $5$. We want to show that $H(S/I,6)=0$.  Clearly
\[ S/I \simeq \mathbb{C}[x_0,x_1]/(\bar G_1,\ldots,\bar G_4).\]

It is well known \cite{GeSc, IaKa,
Fro}  that 4 general binary forms of degree 5 generate $\CC
[x_0,x_1]_6$ and we are done.
\end{proof}

For more on this topic see Remark \ref{hochesterRem}.
\end{ex}

\section{The Theorem}

We are now ready to prove the main  theorem of this paper, a
description of all the possible complete intersections of
codimension $r$ that can be found on a general hypersurface of
degree $d$ in $\PP^n$ when $2r \leq n+2$.

%Recall that from Lemma \ref{lessthand} and Remark \ref{switch} it
%is  sufficient to consider the existence of a $CI(a_1, \ldots ,
%a_r)$ on the generic hypersurface of degree $d$ when $a_1\leq
%\ldots \leq  a_r\leq d/2$.

\begin{thm}\label{maintheorem}
Let $X\subset\PP^n$ be a generic degree $d$ hypersurface, with
$n,d>1$. Then $X$ contains a $CI(a_1,\ldots,a_r)$, with $2r\leq
n+2$, and the $a_i$ all less than $d$, in the following (and only
in the following) instances:
\medskip

\begin{itemize}
\item $n=2$: then $r=2$, $d$ arbitrary and $a_1$ and $a_2$ can
assume any value less than $d$;

\item $n=3$, $r=2 $:  for $d \leq 3$ we have that $a_1$ and $a_2$
can assume any value less than $d$;

\item $n=4$, $r = 3$: for $d \leq 5$ we have that $a_1, a_2$ and
$a_3$ can assume any value less than $d$;

\item $n=6,r = 4$ or $n=8,r = 5$: for $d \leq 3$ we have that
$a_1, \ldots ,a_r$ can assume any value less than $d$;

\item $n=5,7$ or $n>8$, $2r=n+1$ or $2r = n+2$:  we have only
linear spaces on quadrics, i.e. $d = 2$ and $a_1 = \ldots =a_r =
1$.

\end{itemize}

\end{thm}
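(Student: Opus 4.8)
The plan is to reduce the problem, via Lemma~\ref{algeqlem}, to a purely algebraic question about the Hilbert function in degree $d$ of the quotient $S/(F_1,\ldots,F_r,G_r,\ldots,G_1)$, where the $F_i,G_i$ are generic of degrees $a_i$ and $d-a_i$. The generic hypersurface contains the complete intersection precisely when this Hilbert function vanishes in degree $d$. By Remark~\ref{switch} I may assume $a_1\leq\cdots\leq a_r\leq d/2$, and by Proposition~\ref{smallrPROP} the only cases with content occur when $2r=n+1$ or $2r=n+2$, so the ideal has height $2r\geq n+1$ and $S/I$ is artinian. The strategy is then to compute, or at least control, $H(S/I,d)$ in each of these two boundary cases and determine exactly when it is zero.

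First I would treat the hyperplane section that makes the ring artinian of the right dimension. When $2r=n+1$ the ideal $I=(F_1,\ldots,F_r,G_1,\ldots,G_r)$ has height equal to $n+1$, so $S/I$ is already artinian; when $2r=n+2$ I would cut by one further generic linear form to land in the same situation, reducing both cases to a quotient of a polynomial ring in $2r-1$ variables. This is exactly the ring $A=\CC[y_1,\ldots,y_{2r-1}]/(F_1,\ldots,F_r,G_r,\ldots,G_2)$ of Lemma~\ref{basiclemma}, except that Lemma~\ref{basiclemma} omits the last generator $G_1$. The key move is therefore to compute $H(S/I,d)$ as the failure of $\overline{G_1}$ to be surjective on $A_{d-(d-a_1)}=A_{a_1}$: one has $H(S/I,d)=\dim_\CC\bigl(A/(G_1)\bigr)_d$, and since $\deg G_1=d-a_1$, multiplication by $\overline{G_1}$ maps $A_{a_1}\to A_d$, so the relevant quantity is governed by the rank of this map.

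The heart of the argument is then to invoke part~\eqref{six} of Lemma~\ref{basiclemma}: multiplication by $\overline{G_1}$ on $A_i$ has maximal rank. Hence the vanishing $H(S/I,d)=0$ is equivalent to the surjectivity of $A_{a_1}\to A_d$, which by maximal rank happens if and only if $H_A(a_1)\geq H_A(d)$. Because $H_A$ is symmetric about $c=\tfrac{(r-1)d+a_1-2r+1}{2}$ by part~\eqref{one}, I can rewrite $H_A(d)=H_A(2c-d)$ and compare using the unimodality-type conclusions \eqref{three}--\eqref{five}: part~\eqref{five} says precisely that $H_A(a_1)>H_A(i)$ for $i>c$ exactly when $c-a_1<i-c$, i.e. when $i>2c-a_1$. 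Substituting $i=d$ converts the condition $H_A(a_1)\geq H_A(d)$ into an explicit numerical inequality among $d$, $r$, and $a_1$, which I would then solve. The main obstacle is bookkeeping across the two boundary parities $2r=n+1$ versus $2r=n+2$ together with the small exceptional cases $(r,a_1)$ excluded from the hypotheses of parts~\eqref{three}--\eqref{five} of Lemma~\ref{basiclemma} (namely $r=2,a_1<5$; $r=3,a_1<3$; and $r=3,a_1=2,d=4$). These low-degree exceptions must be handled by direct computation of the Hilbert function, and it is here that the sporadic entries in the theorem's list ($n=3,d\leq3$; $n=4,d\leq5$; $n=6,8,d\leq3$) will emerge; confirming that no other sporadic solutions hide in these gaps is the delicate part of the proof. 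Once the inequality is solved in the generic range and the finite exceptional list is checked by hand, the stated classification follows, with the quadric case $d=2,\ a_1=\cdots=a_r=1$ appearing as the unique solution in the remaining dimensions.
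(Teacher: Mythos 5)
Your treatment of the parity $2r=n+2$ is essentially the paper's own argument: form the artinian Gorenstein ring $A$ of Lemma \ref{basiclemma} by omitting $G_1$, use part \eqref{six} to get maximal rank of $\overline{G_1}\colon A_{a_1}\to A_d$, and convert surjectivity into the comparison of $H_A(a_1)$ with $H_A(d)$ via the symmetry about $c$ and parts \eqref{three}--\eqref{five}, with the low $(r,a_1)$ cases excluded from those parts handled separately. (One bookkeeping omission there: the standing hypothesis of Lemma \ref{basiclemma} is $a_1>1$, so $a_1=1$ is exceptional for \emph{every} $r$; the paper disposes of it not by direct computation but by quotienting by the linear generator $F_1$ itself --- legitimate precisely because $F_1$ is one of the generators --- thereby reducing to codimension $r-1$ in one fewer variable.)

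The genuine gap is your handling of $2r=n+1$. First, your variable count is reversed: when $2r=n+2$ the ring already lives in $n+1=2r-1$ variables (no cutting is needed; this is exactly the setting of Lemma \ref{basiclemma}), whereas $2r=n+1$ lives in $2r$ variables. More seriously, reducing that case by ``one further generic linear form'' is not a faithful reduction: the surjection $S/I\twoheadrightarrow S/(I+(L))$ only gives $H(S/(I+(L)),d)\leq H(S/I,d)$, so vanishing after cutting does not imply the vanishing $H(S/I,d)=0$ that certifies existence. Concretely, take $r=3$, $n=5$, $d=3$, $a_1=a_2=a_3=1$: after cutting, the ring is $\CC[z_1,z_2]$ modulo three generic quadrics, whose Hilbert function vanishes from degree $2$ on, so your reduction would report that the generic cubic fourfold in $\PP^5$ contains a plane; but $S/I$ is a Gorenstein complete intersection of $2r$ forms in $2r$ variables with socle degree $r(d-2)=3$, so $H(S/I,3)=1\neq 0$ and no such plane exists. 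The paper's actual argument for $2r=n+1$ is exactly this socle computation: $H(S/I,d)=0$ if and only if $d\geq r(d-2)+1$, which forces $d=2$ for $r\geq 3$ but allows $d\leq 3$ when $r=2$. Note that this is where the theorem's $n=3$, $r=2$, $d=3$ entries come from; your plan, which expects only the quadric case outside the $2r=n+2$ list, would miss them (they are of the opposite parity and cannot emerge from the exceptional-case analysis of Lemma \ref{basiclemma}, as you suggest).
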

\begin{proof}
 Recall that from Lemma \ref{lessthand} and Remark \ref{switch} it
 is  sufficient to consider the existence of a $CI(a_1, \ldots ,
 a_r)$ on the generic hypersurface of degree $d$ when $a_1\leq
 \ldots \leq  a_r\leq d/2$.

When $2r<n+1$, by Proposition \ref{smallrPROP}, we know that no
complete intersection exists. Hence we have only to consider the
cases $2r=n+1$ and $2r=n+2$

In order to use Lemma \ref{algeqlem} we consider the generic forms
$F_1,\ldots,F_r$ and $G_1,\ldots,G_r$ of degrees $a_i$ and $d-a_i$
respectively. If we let $S=\CC[x_0,\ldots,x_n]$ and
$I=(F_1,\ldots,F_r,G_r,\ldots,G_1)$ we want to check whether
$H(S/I,d)=0$ or not.

The case $2r=n+1$. In this case, $S/I$ is an artinian Gorenstein
ring and $e=r(d-2)+1$ is the first place where one has
$H(S/I,e)=0$. Thus, the generic degree $d$ hypersurface contains a
$CI(a_1,\ldots,a_r)$ if and only if $H(S/I,d)=0$ and this is
equivalent to the inequality
\[d\geq r(d-2)+1\]
which is never satisfied unless $d=2$ and $a_1=\ldots=a_r=1$.

The case $2r=n+2$ will be proved using Lemma \ref{basiclemma}. In
order to do this, we divide the proof into three parts:

\begin{itemize}

\item {\em the hyperplane case}: $a_1=1$ any $r$;

\item {\em the plane case}: $a_1=2,3,4$ for $r=2$, and hence
$n=2$;

\item {\em the four space case}: $a_1=a_2=a_3=2$ and $d=4$ for
$r=3$, and hence $n=4$, ;

\item {\em the general case}: one of the following holds

\begin{equation}
\begin{array}{l}
r=2 \mbox{ and } a_1\geq 5, \mbox{ or}\\
r=3 \mbox{ and } a_1\geq 3, \mbox{ or}\\
r=3, a_1=2 \mbox{ and } d\neq 4, \mbox{ or}\\
r>3 \mbox{ and } a_1\geq 2.
\end{array}
\label{generalcasecond}
\end{equation}

\end{itemize}

{\em The hyperplane case.} We need to study $CI(1,a_2,\ldots,
a_r)$ on the generic degree $d$ hypersurface of $\PP^{2r-2}$. As
one of the generators of the complete intersection is a
hyperplane, we can reduce to a smaller dimensional case. In
algebraic terms, for  a generic linear form $L$, we consider the
surjective quotient map
\[S\longrightarrow S/(L)\]
to get the following:

\begin{quote}{\it
the generic element of $S_d$ can be decomposed as a product of
forms of degrees $1,a_2,\ldots, a_r$, i.e. it has the form
$\sum_{i=1}^r F_iG_i$ with $\deg F_1=1$ and $\deg F_i=a_i$,
$i=2,\ldots,r$}
\end{quote}

if and only if

\begin{quote}{\it
the generic element of $(S/(L))_d\simeq(
\mathbb{C}[x_0,\ldots,x_{n-1}])_d$ can be decomposed as a product
of forms of degrees $a_2,\ldots, a_r$, i.e. it has the form
$\sum_{i=2}^r \bar{F_i}\bar{G_i}$ with $\deg \bar F_i=a_i$,
$i=2,\ldots,r$.}
\end{quote}

Hence, we have to study $CI(a_2,\ldots, a_r)$ on the generic
degree $d$ hypersurface of $\PP^{2r-3}$, i.e. codimension $r'=r-1$
complete intersections in $\PP^{n'}$, $n'=2r-3$. As $2r'=n'+1$
this situation was treated before and the only case where the
complete intersections exist is for $d=2$ and $a_i=1$ for all $i$.

{\em The plane case.} We have to study $CI(a_1,a_2)$ on the
generic degree $d$ curve of $\PP^2$ for $a_1=2,3,4$, any $a_2,d$
such that $a_1\leq a_2\leq d$. Now, $S=\CC[x_0,x_1,x_2]$ and we
consider forms $F_1,F_2,G_2$ and $G_1$ of degrees, respectively,
$a_1,a_2,d-a_2$ and $d-a_1$. We want to study the ring
\[A=S/(F_1,F_2,G_2)\]
and to compare $H(A,a_1)$ and $H(A,d)$ in order to apply
\eqref{six} of Lemma \ref{basiclemma} to show that
\[H(A/(\bar G_1),d)=0.\] Using Lemma \ref{basiclemma} \eqref{one}, we see that the last non-zero value of $H_A$
occurs for
\[ d+a_1-3.\]
In particular, for $a_1=2$, $H(A,d)=0$ and a $CI(2,a_2)$ exists
for any $a_2$ and $d$, $2\leq a_2\leq d$. If $a_1=3$, then
$H(A,d)=1$ and the same conclusion holds for $CI(3,a_2)$. Finally,
if $a_1=4$, then $H(A,d)=H(A,1)$ and it is easy to see that
$H(A,1)\leq H(A,4)$. Hence, for $a_1=2,3,4$ and any $a_2,d$ such
that $a_1\leq a_2\leq d$, the generic degree $d$ plane curve
contains a $CI(a_1,a_2)$.

{\em The four space case.} We address the case of $CI(2,2,2)$ on
the generic degree 4 threefold in $\PP^4$. Hence, we consider
$S=\CC[x_0,\ldots,x_4]$ and generic quadratic forms
$F_1,F_2,F_3,G_3,G_2$ and $G_1$. Let $A$ be the quotient ring
\[S/(F_1,F_2,F_3,G_3,G_2)\]
and notice that by the vanishing of the lefthand side of
\eqref{star} in the proof of Lemma \ref{basiclemma} we have that
$H(A,2)=H(A,3)$. Applying \eqref{two} of Lemma \ref{basiclemma}
yields $H(A,2)=H(A,4)$ and hence the required $CI$ exists.

{\em The general case.} Consider the ring
\[A=\CC[x_0,\ldots,x_{2r-2}]/(F_1,\ldots,F_r,G_r,\ldots,G_3,G_2)\]
and the multiplication map given by the degree $d-a_1$ form $\bar
G_1$
\[m:A_{a_1}\rightarrow A_d.\]
Clearly, with this notation, one has that the generic degree $d$
hypersurface contains a $CI(a_1,\ldots,a_r)$ if and only if
\[H(S/I,d)=H(A/(\overline{G_1}),d)=0\]
and this is equivalent to the surjectivity of $m$. We also recall
that by Lemma \ref{basiclemma} \eqref{six} $m$ has maximal rank.
Hence, to study the surjectivity we only have to compare
$H(A,a_1)=\dim A_{a_1}$ and $H(A,d)=\dim A_d$.

When $d=2$, all the degrees $a_i$ are equal to 1, and this was
treated in the hyperplane case.

Now we consider the $d>2$ case and we let $c={(r-1)d+a_1-2r+1\over
2}$. If $d\leq c$ then $\dim A_{a_1}<\dim A_{d}$ by Lemma
\ref{basiclemma}\eqref{four}, thus $m$ cannot be surjective.
Standard computations yield
\[d\leq c \Leftrightarrow 2d\leq (r-1)d+a_1-2r+1 \Leftrightarrow 2+{5-a_1\over r-3}\leq d \mbox{ if } r>3\]
while for $r=2$ the inequality $d\leq c$ never holds. Thus we get
that, when one of the conditions \eqref{generalcasecond} holds,
$m$ is not surjective if
\[r>3 \mbox{ and } d>7\]
or
\[r=3 \mbox{ and } a_1>5. \]

In the case $c<d$ we have to be more careful and the distances
$\alpha =c-a_1$ and $\beta =d-c$ have to be considered. When one
of the conditions \eqref{generalcasecond} holds, by
\ref{basiclemma}\eqref{five}, $m$ is surjective if and only if
$\alpha\leq \beta$. Thus we solve the inequality $\beta-\alpha\geq
0$. This is equivalent to
\[d\leq 2+{3\over r-2}\mbox{ if }r>2\]
while for $r=2$ we always have $\beta -\alpha\geq 0$.

Summing up all these facts one gets that when one of the
conditions \eqref{generalcasecond} holds:
\[r=2: m \mbox{ is surjective},\]
\[r=3: d>5, m \mbox{ is not surjective};d\leq 5, m \mbox{ is surjective};\]
\[r=4,5: d>3, m \mbox{ is not surjective};d\leq 3, m \mbox{ is surjective};\]
\[r\geq 6: d>2, m \mbox{ is not surjective};d\leq 2, m \mbox{ is surjective}\]
and using the treatment of the hyperplane, the plane and the four
space cases we obtain the final result.

\end{proof}

\begin{rem} The fact that a general hypersurface of degree $d\geq 6$ in $\PP^4$ cannot
contain a complete intersection of any type with $a_1,a_2,a_3<d$
is also a consequence of a result about vector bundles proved by
Mohan Kumar, Rao and Ravindra (see \cite{MKRR}).

In $\PP^n$ the existence statement for $d =  2$ is classical. The
$d=3$ cases in $\PP^6$ and $\PP^8$ can be obtained using Theorem
12.8 in \cite{Harris} (see also Proposition \ref{harrisPROP} in
this paper). In $\PP^4$, for $d=3$ and $d=4$ the existence also
follows from the analysis of arithmetically Cohen-Macaulay rank
two bundles on hypersurfaces, contained in \cite{ArrCos} and
\cite{Mad}.

In $\PP^4$, for the case $d=5$, when $\min\{a_i\}=2$, the result also follows from the existence of a canonical curve on the generic quintic threefold of $\PP^4$, as was essentially proved in \cite{Kl}.
\end{rem}

\begin{rem}
For $n=2$ the theorem above states that the generic degree $d$
plane curve contains a $CI(a,b)$ for any $a,b<d$, but it does not
say that this is a set of $ab$ points. The complete intersection
scheme could very well not be reduced. Actually, we can show
reducedness and hence the following holds
\begin{quote}{\em
the generic degree $d$ plane curve contains $ab$ complete
intersection points for any $a,b<d$. }\end{quote}
\end{rem}

\begin{rem}
Again a remark in the case $n=2$. Taking $a_1=a_2=a$ the theorem
above states that $\mbox{Sec}_1(\mathbb{X}_{(a,d-a)})$ is the
whole space. Now, quite generally, the points of the variety of
secant lines either lie on a true secant line or on a tangent line
to $\X_{(a,d-a)}$.  We claim that Proposition \ref{tangentspace}
allows us to conclude that the points of the tangent lines are in
fact already on the true secant lines.  In fact, if
$p=[FG]\in\mathbb{X}_{(a,d-a)}$, then any point $q$ of a tangent
line to $p$ can be written as $[\alpha FG'+\beta F'G]$ for forms
$G',F'$ of degrees $d-a,a$ respectively, and scalars $\alpha$ and
$\beta$. Thus, $q$ lies on the secant line to
$\mathbb{X}_{(a,d-a)}$ joining $[FG']$ and $[GF']$. In conclusion,
we can rephrase the equality
$\mbox{Sec}_1(\mathbb{X}_{(a,d-a)})=\PP(S_d)$ in terms of
polynomial decompositions
\begin{quote}{\em
let $a<d$, \underline{any} degree $d$ form in three variables $F$
can be written as $F=F_1G_1+F_2G_2$ for suitable forms $F_i$ of
degree $a$ and $G_i$ of degree $d-a$.}\end{quote}

This answers a question raised during a correspondence between
Zinovy Reichstein and the first author.
\end{rem}

\begin{rem}\label{hochesterRem} The restriction $2r\leq n+2$ in Theorem
\ref{maintheorem} is related to the fact that Fr\"{o}berg's
conjecture is only known to be true, in general, when the number
of forms does not exceed one more than the number of variables.
However, there are other partial results on this conjecture that
we can use to extend our theorem. E.g. in \cite{HoLak} Hochester
and Laksov showed that a piece of Fr\"{o}berg's conjecture holds.
More precisely they showed that if an ideal is generated by
generic forms of the same degree $d$ then the size of that ideal
in degree $d+1$ is exactly what is predicted by Fr\"{o}berg's
conjecture. Using this we can prove the following:
\begin{prop}\label{harrisPROP}
The generic degree $d>2$ hypersurface in $\PP^n$ contains a
complete intersection of type $(a_1,\ldots,a_r)$ where $a_i=1$ or
$a_i=d-1$ for all i, if and only if
\[{n-r+d\choose d}\leq (n-r+1)r.\]
When $a_1=\ldots=a_r=1$ this is the well known result on the
non-emptyness of the Fano variety of $(n-r)$-planes on the generic
degree $d$ hypersurface of $\PP^n$ (e.g. see \cite{Harris} Theorem
12.8).
\end{prop}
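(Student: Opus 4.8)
The plan is to reduce the existence problem to the single Hilbert-function computation supplied by Lemma \ref{algeqlem}, and then to evaluate that Hilbert function using the Hochster--Laksov theorem cited in Remark \ref{hochesterRem}. The first---and conceptually decisive---observation is that for every index $i$ the unordered pair $\{a_i, d-a_i\}$ equals $\{1,d-1\}$, whichever of the two values $a_i$ takes. Consequently, in the ideal $I=(F_1,\ldots,F_r,G_r,\ldots,G_1)$ appearing in Lemma \ref{algeqlem}\eqref{algeqlem2}, the $2r$ generic generators always consist of exactly $r$ linear forms and $r$ forms of degree $d-1$, independently of how the values $1$ and $d-1$ are distributed among the $a_i$; in particular the answer will depend only on $n$, $r$ and $d$. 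By Lemma \ref{algeqlem} the generic degree $d$ hypersurface carries the desired complete intersection if and only if $H(S/I,d)=0$.

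Next I would eliminate the $r$ generic linear generators. Since $Y$ has codimension $r$ in $\PP^n$ we have $r\le n$, so these linear forms are independent and $S$ modulo them is a polynomial ring $R:=\CC[y_0,\ldots,y_{n-r}]$ in $n-r+1$ variables; moreover $S/I\cong R/(\bar H_1,\ldots,\bar H_r)$, where $\bar H_1,\ldots,\bar H_r$ are $r$ generic forms of degree $d-1$. Thus the condition $H(S/I,d)=0$ becomes the surjectivity of the evaluation map
\[
\mu\colon R_1^{\,r}\To R_d,\qquad (\ell_1,\ldots,\ell_r)\longmapsto \sum_{i=1}^r \ell_i\bar H_i,
\]
whose image is precisely $(\bar H_1,\ldots,\bar H_r)_d$, since every degree-$d$ element of that ideal is a combination of the degree-$(d-1)$ generators with linear coefficients.

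The crux, and the step for which I would lean on an external input, is the rank of $\mu$: this is exactly a multiplication map one degree above the generating degree of $r$ generic forms, the situation resolved by Hochster and Laksov \cite{HoLak}, who prove it has maximal rank. The only relations that could a priori depress the rank are the Koszul syzygies, but for forms of degree $d-1$ these occur in degree $2(d-1)=2d-2$, which exceeds $d$ precisely because $d>2$; hence none are forced and $\dim(\bar H_1,\ldots,\bar H_r)_d=\min\bigl(r(n-r+1),\,{n-r+d\choose d}\bigr)$, using $\dim R_1=n-r+1$ and $\dim R_d={n-r+d\choose d}$. Therefore $\mu$ is surjective if and only if $r(n-r+1)\ge {n-r+d\choose d}$, which is the asserted inequality. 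Finally, the specialization $a_1=\cdots=a_r=1$ identifies this with the non-emptiness criterion for the Fano variety of $(n-r)$-planes, so all that remains is to match it with Theorem 12.8 of \cite{Harris}.
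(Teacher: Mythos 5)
Your proposal is correct and takes essentially the same route as the paper's proof: reduce via Lemma \ref{algeqlem} to the vanishing of $H(S/I,d)$, quotient out the $r$ generic linear forms to land in $\CC[y_0,\ldots,y_{n-r}]$ with $r$ generic forms of degree $d-1$, and invoke the Hochster--Laksov theorem to compute the ideal in degree $d=(d-1)+1$. Your additional observations---that the multiset of generator degrees is $r$ ones and $r$ copies of $d-1$ no matter how the values $1$ and $d-1$ are distributed among the $a_i$, and the Koszul-syzygy degree count $2(d-1)>d$ explaining why the expected dimension is the minimum---are correct elaborations of the same argument rather than a different approach.
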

\begin{proof}
Using Lemma \ref{algeqlem} we have to show the vanishing, in
degree $d$, of the Hilbert function  of
\[A=\CC[x_0,\ldots,x_n]/(L_1,\ldots,L_r,F_1,\ldots,F_r)\]
for generic linear forms $L_i$ and generic degree $d-1$ forms
$F_i$. Clearly, as the linear forms are generic, we have
\[A\simeq\CC[x_0,\ldots,x_{n-r}]/(\overline{F_1},\ldots,\overline{F_r}).\]
Hence $A_d=0$ if and only if
$(\overline{F_1},\ldots,\overline{F_r})$ contains all the degree
$d$ forms. Using the result by Hochester and Laksov this is
equivalent to
\[{n-r+d\choose d}\leq (n-r+1)r.\]
\end{proof}

\end{rem}

\begin{ex} The variety $\X_{(1,3)}$ of reducible quartic hypersurfaces of $\PP^3$ and its secant line variety provide interesting examples for several reasons.

First note that $\X_{(1,3)} \subset \PP^{34}$ is a variety of
dimension $3 + 19 = 22$.  From Corollary \ref{tangentjoin} it is
easy to deduce that $\dim Sec_1(\X_{(1,3)}) = 33$.  Thus,
$\X_{(1,3)}$ is a defective variety whose virtual defect $e$ is,
$$e = 2\dim \X_{(1,3)} + 1 - \dim Sec_1(\X_{(1,3)}) = 12.$$

\medskip\noindent $1)$ \ Consider the Noether-Lefschetz locus of quartic hypersurfaces in $\PP^3$ with Picard group $\neq \Z$.  The quartic hypersurfaces which contain a line are clearly in the Noether-Lefschetz locus.  If $\ell$ is a line defined by the linear forms $L_1, L_2$ then the form $F$, of degree 4, defines a hypersurface containing $\ell$ if and only if
$$
F = L_1G_1 + L_2G_2 \hbox{ where } \deg G_i = 3 .
$$

I.e. if and only if $[F] \in Sec_1(\X_{(1,3)})$.  Since, as we observed, $$\dim Sec_1(\X_{(1,3)}) = 33$$ this forces the secant variety to be a component of the Noether-Lefschetz locus.

We wonder how often joins of other varieties of reducible forms give components of the appropriate Noether-Lefschetz locus.

\medskip\noindent $2)$ \ Since $\X_{(1,3)}$ is defective for secant lines we have, by a theorem of \cite{ChCi06}, that for every two points on $\X_{(1,3)}$ there is a subvariety $\Sigma$,  containing those two points, whose linear span has dimension $ \leq 2 \dim \Sigma + 1 - e$, where $e$ is the defect of $\X_{(1,3)}$.

We now give a description of such $\Sigma$'s for the variety $\X_{(1,3)}$.

Let $[H_1F_1], [H_2F_2]$ be two points of $\X_{(1,3)}$ and let
$\ell$ be the line in $\PP^3$ defined by $H_1 = 0 = H_2$. Consider
$\Sigma \subset \X_{(1,3)}$, the subvariety of reducible quartics
whose linear components contain $\ell$.  Clearly $\dim \Sigma = 1
+ 19 = 20$.  Notice that the linear span of $\Sigma$, $<\Sigma>$,
is contained in the subvariety of all quartics containing $\ell$
and that variety has dimension $34 - 5 = 29$.

Thus, $$\dim <\Sigma > \leq 29 = 2(20) + 1 - 12 = 2 \dim \Sigma + 1 - e,$$ as we wanted to show.

Notice that the existence of $\Sigma$, as above, gives another proof of the defectivity of $\X_{(1,3)}$.

\end{ex}

\bibliographystyle{alpha}
\bibliography{carlini}

\def\cprime{$'$}
\begin{thebibliography}{MKRR06}

\bibitem[AC00]{ArrCos}
Enrique Arrondo and Laura Costa.
\newblock Vector bundles on {F}ano 3-folds without intermediate cohomology.
\newblock {\em Comm. Algebra}, 28(8):3899--3911, 2000.

\bibitem[{\AA}dl88]{AAd}
B.~{\AA}dlandsvik.
\newblock Varieties with an extremal number of degenerate higher secant
  varieties.
\newblock {\em J. Reine Angew. Math.}, 392:16--26, 1988.

\bibitem[Ani86]{Anick}
David~J. Anick.
\newblock Thin algebras of embedding dimension three.
\newblock {\em J. Algebra}, 100(1):235--259, 1986.

\bibitem[Car05]{Ca05Siena}
Enrico Carlini.
\newblock Codimension one decompositions and {C}how varieties.
\newblock In {\em Projective varieties with unexpected properties}, pages
  67--79. Walter de Gruyter GmbH \& Co. KG, Berlin, 2005.

\bibitem[Car06]{Ca04JA}
Enrico Carlini.
\newblock Binary decompositions and varieties of sums of binaries.
\newblock {\em J. Pure Appl. Algebra}, 204(2):380--388, 2006.

\bibitem[CC06]{ChCi06}
Luca Chiantini and Ciro Ciliberto.
\newblock On the concept of {$k$}-secant order of a variety.
\newblock {\em J. London Math. Soc. (2)}, 73(2):436--454, 2006.

\bibitem[CGG02]{CGG1}
M.~V. Catalisano, A.~V. Geramita, and A.~Gimigliano.
\newblock Ranks of tensors, secant varieties of {S}egre varieties and fat
  points.
\newblock {\em Linear Algebra Appl.}, 355:263--285, 2002.

\bibitem[CGG03]{CGG2}
M.~V. Catalisano, A.~V. Geramita, and A.~Gimigliano.
\newblock Erratum of the publisher to: ``{R}anks of tensors, secant varieties
  of {S}egre varieties and fat points'' [{L}inear {A}lgebra {A}ppl.\ {\bf 355}
  (2002), 263--285; {MR}1930149 (2003g:14070)].
\newblock {\em Linear Algebra Appl.}, 367:347--348, 2003.

\bibitem[Chi02]{Chi02}
Jaydeep~V. Chipalkatti.
\newblock Decomposable ternary cubics.
\newblock {\em Experiment. Math.}, 11(1):69--80, 2002.

\bibitem[Cil01]{Ci01}
C.~Ciliberto.
\newblock Geometric aspects of polynomial interpolation in more variables and
  of {W}aring's problem.
\newblock In {\em European Congress of Mathematics, Vol. I (Barcelona, 2000)},
  volume 201 of {\em Progr. Math.}, pages 289--316. Birkh\"auser, Basel, 2001.

\bibitem[CJ96]{CatJ}
Michael~L. Catalano-Johnson.
\newblock The possible dimensions of the higher secant varieties.
\newblock {\em Amer. J. Math.}, 118(2):355--361, 1996.

\bibitem[Fr{\"o}85]{Fro}
Ralf Fr{\"o}berg.
\newblock An inequality for {H}ilbert series of graded algebras.
\newblock {\em Math. Scand.}, 56(2):117--144, 1985.

\bibitem[Ger96]{Ge}
A.~V. Geramita.
\newblock Inverse systems of fat points: {W}aring's problem, secant varieties
  of {V}eronese varieties and parameter spaces for {G}orenstein ideals.
\newblock In {\em The Curves Seminar at Queen's, Vol.\ X (Kingston, ON, 1995)},
  volume 102 of {\em Queen's Papers in Pure and Appl. Math.}, pages 2--114.
  Queen's Univ., Kingston, ON, 1996.

\bibitem[GH85]{GH85}
Phillip Griffiths and Joe Harris.
\newblock On the {N}oether-{L}efschetz theorem and some remarks on
  codimension-two cycles.
\newblock {\em Math. Ann.}, 271(1):31--51, 1985.

\bibitem[Gro05]{SGA2}
Alexander Grothendieck.
\newblock {\em Cohomologie locale des faisceaux coh\'erents et th\'eor\`emes de
  {L}efschetz locaux et globaux ({SGA} 2)}.
\newblock Documents Math\'ematiques (Paris) [Mathematical Documents (Paris)],
  4. Soci\'et\'e Math\'ematique de France, Paris, 2005.
\newblock S\'eminaire de G\'eom\'etrie Alg\'ebrique du Bois Marie, 1962,
  Augment\'e d'un expos\'e de Mich\`ele Raynaud. [With an expos\'e by Mich\`ele
  Raynaud], With a preface and edited by Yves Laszlo, Revised reprint of the
  1968 French original.

\bibitem[GS98]{GeSc}
Anthony~V. Geramita and Henry~K. Schenck.
\newblock Fat points, inverse systems, and piecewise polynomial functions.
\newblock {\em J. Algebra}, 204(1):116--128, 1998.

\bibitem[Har92]{Harris}
J~Harris.
\newblock {\em Algebraic geometry, A first course}.
\newblock Graduate Texts in Math. Springer-Verlag, New York, 1992.

\bibitem[HL87]{HoLak}
Melvin Hochster and Dan Laksov.
\newblock The linear syzygies of generic forms.
\newblock {\em Comm. Algebra}, 15(1-2):227--239, 1987.

\bibitem[IK99]{IaKa}
A.~Iarrobino and V.~Kanev.
\newblock {\em Power sums, {G}orenstein algebras, and determinantal loci},
  volume 1721 of {\em Lecture Notes in Mathematics}.
\newblock Springer-Verlag, Berlin, 1999.

\bibitem[Kle00]{Kl}
Holger~P. Kley.
\newblock Rigid curves in complete intersection {C}alabi-{Y}au threefolds.
\newblock {\em Compositio Math.}, 123(2):185--208, 2000.

\bibitem[Lef21]{Lefschetz}
Solomon Lefschetz.
\newblock On certain numerical invariants of algebraic varieties with
  application to abelian varieties.
\newblock {\em Trans. Amer. Math. Soc.}, 22(3):327--406, 1921.

\bibitem[LM04]{LM}
J.~M. Landsberg and L.~Manivel.
\newblock On the ideals of secant varieties of {S}egre varieties.
\newblock {\em Found. Comput. Math.}, 4(4):397--422, 2004.

\bibitem[Mad00]{Mad}
Carlo Madonna.
\newblock Rank-two vector bundles on general quartic hypersurfaces in {${\Bbb
  P}\sp 4$}.
\newblock {\em Rev. Mat. Complut.}, 13(2):287--301, 2000.

\bibitem[Mam54]{Mamma}
Carmelo Mammana.
\newblock Sulla variet\`a delle curve algebriche piane spezzate in un dato
  modo.
\newblock {\em Ann. Scuola Norm. Super. Pisa (3)}, 8:53--75, 1954.

\bibitem[MKRR06]{MKRR}
N.~Mohan~Kumar, A.~P. Rao, and G.~V. Ravindra.
\newblock Four-by-four {P}faffians.
\newblock {\em Rend. Semin. Mat. Univ. Politec. Torino}, 64(4):471--477, 2006.

\bibitem[MMR03]{MigMR}
J.~Migliore and R.~M. Mir{\'o}-Roig.
\newblock Ideals of general forms and the ubiquity of the weak {L}efschetz
  property.
\newblock {\em J. Pure Appl. Algebra}, 182(1):79--107, 2003.

\bibitem[Sev06]{Severi}
F.~Severi.
\newblock Una proprieta' delle forme algebriche prive di punti muiltipli.
\newblock {\em Rend. Accad. Lincei, II}, 15:691--696, 1906.

\bibitem[Sta80]{Stanley}
Richard~P. Stanley.
\newblock Weyl groups, the hard {L}efschetz theorem, and the {S}perner
  property.
\newblock {\em SIAM J. Algebraic Discrete Methods}, 1(2):168--184, 1980.

\bibitem[Wat87]{Wat}
Junzo Watanabe.
\newblock The {D}ilworth number of {A}rtinian rings and finite posets with rank
  function.
\newblock In {\em Commutative algebra and combinatorics (Kyoto, 1985)},
  volume~11 of {\em Adv. Stud. Pure Math.}, pages 303--312. North-Holland,
  Amsterdam, 1987.

\end{thebibliography}

\end{document}